\documentclass[pdflatex,sn-mathphys-num]{sn-jnl}

\usepackage{graphicx}%
\usepackage{multirow}%
\usepackage{amsmath,amssymb,amsfonts}%
\usepackage{amsthm}%
\usepackage{mathrsfs}%
\usepackage[title]{appendix}%
\usepackage{xcolor}%
\usepackage{textcomp}%
\usepackage{manyfoot}%
\usepackage{booktabs}%
\usepackage{algorithm}%
\usepackage{algorithmicx}%
\usepackage{algpseudocode}%
\usepackage{listings}%
\usepackage{array}

\usepackage{tikz}
\usetikzlibrary{
  arrows.meta,
  calc,
  positioning,
  decorations.pathreplacing,
  patterns,
  fit,
  backgrounds,
  intersections,
  shapes.geometric  
}

\hypersetup{
  colorlinks=true,
  linkcolor=blue!60!black,
  citecolor=blue!60!black,
  urlcolor=blue!60!black
}

\newtheorem{theorem}{Theorem}[section]
\newtheorem{proposition}[theorem]{Proposition}
\newtheorem{lemma}[theorem]{Lemma}
\newtheorem{corollary}[theorem]{Corollary}
\theoremstyle{definition}
\newtheorem{definition}[theorem]{Definition}
\newtheorem{remark}[theorem]{Remark}
\newtheorem{example}[theorem]{Example}

\newtheorem{solution*}{Solution}[section]

\newcommand{\R}{\mathbb{R}}
\newcommand{\Fcal}{\mathcal{F}}
\newcommand{\Lcal}{\mathcal{L}}
\newcommand{\Kcal}{\mathcal{K}}

\newcommand{\Ical}{\mathcal{I}}

\newcommand{\Gr}{\operatorname{Gr}}
\newcommand{\SOL}{\operatorname{SOL}}
\newcommand{\dist}{\operatorname{dist}}
\newcommand{\AVI}{\operatorname{AVI}}

\title{First-Order Optimality Conditions for Mathematical Programming with Equilibrium Constraints}
\author{Louis Shuo Wang\thanks{Email: wang.s41@northeastern.edu}}
\affil{Department of Mathematics, Northeastern University, Boston, 02115, MA, USA}

\abstract{
We present a systematic introduction to first-order optimality conditions for mathematical programs with equilibrium constraints (MPECs), emphasizing the limitations of classical nonlinear programming techniques. The goal is twofold. First, we explain why a direct application of standard optimality conditions—based on reformulating MPECs via KKT systems or differentiable exact penalty functions—is often inadequate, as such approaches typically require strong and restrictive assumptions, including nondegeneracy and smoothness conditions.

Second, we develop a first-principles framework for analyzing MPECs by focusing on the geometric structure of the feasible region. In particular, we study stationarity concepts and provide a detailed characterization of the tangent cone at feasible points, which leads to appropriate constraint qualifications tailored to MPECs. These results form the foundation for rigorous first-order analysis and clarify the relationship between the original MPEC formulation and its KKT-based representation, offering practical guidance for handling these inherently challenging optimization problems.}

\begin{document}
\maketitle

\section{Introduction}

A mathematical program with equilibrium constraints (MPEC) is an
optimization problem where the upper-level variables are constrained by the
solution set of a lower-level variational inequality or complementarity
problem. Such models arise in engineering design, network equilibrium,
contact mechanics, economics, and hierarchical control.

A central message of these notes is that a naive KKT reformulation of the
lower-level equilibrium system can obscure the true local geometry of the
MPEC feasible set. In particular, the tangent cone is generally nonconvex,
so the standard nonlinear programming linearization is not the right
first-order object.

\section{Background Prerequisites for Chapter 3}

This section collects the minimum optimization background needed to read
Chapter 3 on first-order optimality conditions for MPECs. The goal is not
to develop full general theory, but to explain the objects that appear in
the chapter and why they matter.

\subsection{Vectors, gradients, Jacobians, and Hessians}

Let $x\in\mathbb{R}^n$ be a vector of variables and let
$\phi:\mathbb{R}^n\to\mathbb{R}$ be differentiable.

\begin{definition}[Gradient]
The gradient of $\phi$ at $x$ is
\[
\nabla \phi(x)
=
\begin{bmatrix}
\frac{\partial \phi}{\partial x_1}(x)\\
\vdots\\
\frac{\partial \phi}{\partial x_n}(x)
\end{bmatrix}.
\]
It gives the first-order change of $\phi$:
\[
\phi(x+d)\approx \phi(x)+\nabla\phi(x)^T d.
\]
\end{definition}

\begin{definition}[Jacobian]
If $g:\mathbb{R}^n\to\mathbb{R}^m$ is differentiable, its Jacobian is the
matrix
\[
\nabla g(x)=
\begin{bmatrix}
\nabla g_1(x)^T\\
\vdots\\
\nabla g_m(x)^T
\end{bmatrix}.
\]
Each row is the gradient of one component function.
\end{definition}

\begin{definition}[Hessian]
If $\phi$ is twice differentiable, its Hessian is
\[
\nabla^2 \phi(x)=
\left[\frac{\partial^2\phi}{\partial x_i\partial x_j}(x)\right]_{i,j=1}^n.
\]
It captures second-order curvature.
\end{definition}

\begin{remark}
In optimization, first-order theory is based mainly on gradients and
Jacobians. Hessians enter second-order theory.
\end{remark}

\subsection{Local minimizers and feasible directions}

Consider the constrained optimization problem
\[
\min_x \ \phi(x)
\qquad \text{subject to } x\in \Omega,
\]
where $\Omega\subseteq \mathbb{R}^n$ is the feasible set.

\begin{definition}[Local minimizer]
A point $\bar x\in \Omega$ is a local minimizer if there exists $\varepsilon>0$
such that
\[
\phi(\bar x)\le \phi(x)
\qquad
\forall x\in \Omega \cap B_\varepsilon(\bar x).
\]
\end{definition}

\begin{definition}[Feasible direction]
A vector $d\in\mathbb{R}^n$ is a feasible direction at $\bar x\in\Omega$ if
one can move from $\bar x$ a little in direction $d$ while staying feasible,
at least to first order.
\end{definition}

The mathematical object that collects all such first-order feasible
directions is the tangent cone.

\subsection{Tangent cone and linearized cone}

\begin{definition}[Tangent cone]
Let $\Omega\subseteq\mathbb{R}^n$ and $\bar x\in\Omega$. The tangent cone to
$\Omega$ at $\bar x$ is
\[
T(\bar x;\Omega)
:=
\left\{
d:
\exists\, x^k\in \Omega,\ x^k\to \bar x,\ \exists\, t_k\downarrow 0
\text{ such that }
\frac{x^k-\bar x}{t_k}\to d
\right\}.
\]
\end{definition}

\begin{remark}
The tangent cone contains all first-order directions in which feasibility can
be approached. It is the correct geometric object for first-order necessary
conditions.
\end{remark}

If $\bar x$ is a local minimizer, then no tangent direction can be a descent
direction for the objective.

\begin{theorem}[Primal first-order necessary condition]
If $\bar x$ is a local minimizer of
\[
\min_{x\in\Omega}\phi(x),
\]
and $\phi$ is differentiable, then
\[
\nabla \phi(\bar x)^T d \ge 0
\qquad \forall d\in T(\bar x;\Omega).
\]
\end{theorem}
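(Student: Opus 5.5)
The plan is to argue by contradiction, using the sequential definition of the tangent cone together with the first-order Taylor expansion of $\phi$ at $\bar x$. Suppose some $d\in T(\bar x;\Omega)$ satisfies $\nabla\phi(\bar x)^T d<0$. By the definition of $T(\bar x;\Omega)$ there are feasible points $x^k\in\Omega$ with $x^k\to\bar x$ and scalars $t_k\downarrow 0$ such that $d^k:=(x^k-\bar x)/t_k\to d$. The aim is to show that $\phi(x^k)<\phi(\bar x)$ for all large $k$. Since $x^k\to\bar x$, eventually $x^k\in\Omega\cap B_\varepsilon(\bar x)$, and this contradicts the local minimality of $\bar x$.

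The key step is the expansion. Because $\phi$ is (Fréchet) differentiable at $\bar x$,
\[
\phi(x^k)-\phi(\bar x)=\nabla\phi(\bar x)^T(x^k-\bar x)+o(\|x^k-\bar x\|).
\]
Dividing by $t_k>0$ gives
\[
\frac{\phi(x^k)-\phi(\bar x)}{t_k}=\nabla\phi(\bar x)^T d^k+\frac{o(\|x^k-\bar x\|)}{\|x^k-\bar x\|}\,\|d^k\|,
\]
where the last term is interpreted as $0$ whenever $x^k=\bar x$. Now $\|d^k\|\to\|d\|$ is bounded and $\|x^k-\bar x\|\to 0$, so the remainder tends to zero. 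The first term tends to $\nabla\phi(\bar x)^T d<0$. Hence the left-hand side is negative for all large $k$, so $\phi(x^k)<\phi(\bar x)$, which gives the contradiction described above.

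The main, though mild, obstacle is handling the remainder correctly. The little-$o$ term is controlled relative to $\|x^k-\bar x\|$, not relative to $t_k$. It is the tangent-cone normalization, namely the boundedness of $(x^k-\bar x)/t_k$, that converts one scale into the other. It also matters that differentiability is taken in the Fréchet sense, so that the remainder is uniform over directions. Directional (Gâteaux) differentiability alone would not suffice, because the $x^k$ do not approach $\bar x$ along the fixed ray $\bar x+t d$. The degenerate case $d=0$ needs no argument, since then $\nabla\phi(\bar x)^T d=0$.
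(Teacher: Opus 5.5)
Your proof is correct. The paper gives no proof of this theorem, only the preceding remark that no tangent direction can be a descent direction. Your contradiction argument (Fréchet expansion along the tangent sequence, with $\|x^k-\bar x\|/t_k=\|d^k\|$ turning the $o(\|x^k-\bar x\|)$ remainder into $o(t_k)$) is the standard rigorous version of that remark, and your point that Gâteaux differentiability alone would not suffice is accurate.
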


In practice, $T(\bar x;\Omega)$ may be hard to compute exactly. For a smooth
nonlinear program
\[
\min_x \ \phi(x)
\quad
\text{s.t.}\quad
g_i(x)\le 0,\ i=1,\dots,m,\qquad
h_j(x)=0,\ j=1,\dots,p,
\]
one often uses the \emph{linearized cone}.

\begin{definition}[Linearized cone]
Let $\bar x$ be feasible and define the active inequality index set
\[
I(\bar x):=\{i:g_i(\bar x)=0\}.
\]
Let $\mathcal{F}\subset \Omega$ be the feasible region for the mathematical program under consideration.
The linearized cone is
\[
\mathcal{L}(\bar x; \mathcal{F})
:=
\left\{
d\in \mathcal{T}(\bar{x};\Omega):
\nabla g_i(\bar x)^T d\le 0 \ \forall i\in I(\bar x),\,\,\,
\nabla h_j(\bar x)^T d=0 \ \forall j
\right\}.
\]
\end{definition}

\begin{remark}
Always,
\[
T(\bar x;\mathcal{F})\subseteq \mathcal{L}(\bar x;\mathcal{F}).
\]
If equality holds, then the abstract tangent-cone condition becomes a
computable linear condition.
\end{remark}

\subsection{Lagrangian and KKT conditions}

For the nonlinear program above, define the Lagrangian
\[
\mathcal{L}(x,\lambda,\mu)
=
\phi(x)+\sum_{i=1}^m \lambda_i g_i(x)+\sum_{j=1}^p \mu_j h_j(x).
\]

\begin{definition}[KKT conditions]
A feasible point $\bar x$ satisfies the KKT conditions if there exist
multipliers $\bar\lambda\in\mathbb{R}^m$ and $\bar\mu\in\mathbb{R}^p$ such that
\begin{align*}
\nabla_x \mathcal{L}(\bar x,\bar\lambda,\bar\mu)&=0
\qquad\text{(stationarity)},\\
g_i(\bar x)&\le 0,\quad i=1,\dots,m
\qquad\text{(primal feasibility)},\\
h_j(\bar x)&=0,\quad j=1,\dots,p
\qquad\text{(primal feasibility)},\\
\bar\lambda_i&\ge 0,\quad i=1,\dots,m
\qquad\text{(dual feasibility)},\\
\bar\lambda_i g_i(\bar x)&=0,\quad i=1,\dots,m
\qquad\text{(complementary slackness)}.
\end{align*}
\end{definition}

\begin{remark}
The KKT system is a \emph{primal--dual} form of first-order optimality.
It augments the primal variable $x$ with multiplier variables.
\end{remark}

\subsection{Constraint qualifications}

KKT conditions are not automatically valid at every local minimizer. Some
regularity assumption is needed.

\begin{definition}[Constraint qualification]
A constraint qualification (CQ) is an assumption ruling out pathological
degeneracy of the constraints, so that tangent-cone geometry and
multiplier-based conditions behave well.
\end{definition}

Two standard examples are LICQ and MFCQ.

\begin{definition}[LICQ]
The linear independence constraint qualification holds at a feasible point
if the gradients of all active inequalities together with the gradients of all
equalities are linearly independent.
\end{definition}

\begin{definition}[MFCQ]
The Mangasarian--Fromovitz constraint qualification holds at a feasible
point if the equality gradients are linearly independent and there exists a
direction that preserves equalities while strictly decreasing all active
inequalities.
\end{definition}

\begin{remark}
Under standard CQs, the tangent cone of an NLP matches the linearized
cone, and KKT conditions hold at local minimizers.
\end{remark}

\subsection{Variational inequalities}

MPECs involve a lower-level equilibrium problem. One standard way to
model equilibrium is through a variational inequality.

\begin{definition}[Variational inequality]\label{def:variation_ineq}
Let $K\subseteq\mathbb{R}^m$ be closed and convex, and let
$F:\mathbb{R}^m\to\mathbb{R}^m$. The variational inequality problem
$\mathrm{VI}(F,K)$ is to find $y\in K$ such that
\[
F(y)^T(v-y)\ge 0
\qquad \forall v\in K.
\]
\end{definition}

\begin{remark}
Interpretation: at the equilibrium point $y$, moving to any other feasible
point $v\in K$ cannot produce a negative first-order directional change in
the equilibrium map.
\end{remark}

\begin{definition}[Solution set]
The solution set of the VI is denoted
\[
\mathrm{SOL}(F,K).
\]
\end{definition}

\begin{example}
If $K=\mathbb{R}^m$, then the VI condition becomes
\[
F(y)=0.
\]
So VIs generalize systems of equations.
\end{example}

\begin{definition}[Affine variational inequality]
If
\[
F(y)=My+q
\]
with matrix $M$ and vector $q$, the problem is called an affine
variational inequality (AVI). We use the notations $\mathrm{AVI}(q,M,K)$ and $\mathrm{SOL}(q,M,K)$ to represent the affine variation problem and its solution set. Here, $K$ is the same as in Definition~\ref{def:variation_ineq}.
\end{definition}

\subsection{Complementarity problems}

A very important special case of a VI is a complementarity system.

\begin{definition}[Nonlinear complementarity problem]
Given $F:\mathbb{R}^m\to\mathbb{R}^m$, the nonlinear complementarity
problem (NCP) is to find $y\in\mathbb{R}^m$ such that
\[
y\ge 0,\qquad F(y)\ge 0,\qquad y_iF_i(y)=0\ \ \forall i.
\]
\end{definition}

\begin{remark}
For each index $i$, complementarity means:
\[
\text{either } y_i=0 \text{ or } F_i(y)=0,
\]
possibly both. Thus the product condition enforces a switch-like behavior.
\end{remark}

\begin{definition}[Linear complementarity problem]
If $F(y)=My+q$, then the NCP becomes a linear complementarity problem
(LCP):
\[
y\ge 0,\qquad My+q\ge 0,\qquad y^T(My+q)=0.
\]
\end{definition}

\begin{remark}
Complementarity systems appear naturally in contact mechanics,
equilibrium networks, market clearing, and KKT systems of optimization
problems.
\end{remark}

\subsection{From lower-level equilibrium to MPEC}

An MPEC combines an upper-level optimization problem with a lower-level
equilibrium problem.

\begin{definition}[MPEC: informal form]
A mathematical program with equilibrium constraints has the form
\[
\min_{x,y} f(x,y)
\qquad
\text{s.t. } (x,y)\in Z,\quad y\in S(x),
\]
where $S(x)$ is the solution set of a lower-level VI or complementarity
problem.
\end{definition}

\begin{remark}
The variable $x$ is an upper-level design/control variable, while $y$
must satisfy an equilibrium condition depending on $x$.
\end{remark}

At first sight one may think: ``replace the lower-level VI by its KKT
conditions and then apply ordinary nonlinear programming theory.'' The
main lesson of Chapter 3 is that this is generally too naive.

\subsection{Why MPEC is harder than ordinary NLP}

For ordinary NLPs, under a suitable CQ, the tangent cone is often a single
polyhedral cone described by linearized constraints. For MPECs, however,
the feasible set may have a kinked or branch-like local structure.

\begin{remark}
Near a feasible point, an MPEC feasible set may look like the union of
several smooth pieces. Therefore its tangent cone can be a \emph{union of
polyhedral cones}, and that union is generally nonconvex.
\end{remark}

This is exactly why Chapter 3 introduces an MPEC-specific linearized cone
instead of relying on the single-cone NLP picture.

\subsection{Polyhedral cones}

\begin{definition}[Cone]
A set $K\subseteq\mathbb{R}^n$ is a cone if for every $d\in K$ and every
$\alpha\ge 0$, one has $\alpha d\in K$.
\end{definition}

\begin{definition}[Polyhedral cone]
A cone is polyhedral if it can be written as the solution set of finitely many
linear equalities and inequalities.
\end{definition}

\begin{example}
The nonnegative orthant
\[
\mathbb{R}^n_+ = \{x\in\mathbb{R}^n:x_i\ge 0,\ i=1,\dots,n\}
\]
is a polyhedral cone.
\end{example}

\begin{remark}
Polyhedral cones are important because optimization over them is much more
tractable than over general nonlinear sets.
\end{remark}

\subsection{Primal versus primal--dual stationarity}

\begin{definition}[Primal stationarity]
A first-order condition written directly in terms of feasible directions,
for example
\[
\nabla f(\bar x)^T d\ge 0
\qquad \forall d\in T(\bar x;\Omega),
\]
is called a primal stationarity condition.
\end{definition}

\begin{definition}[Primal--dual stationarity]
A first-order condition written using both primal variables and
multiplier variables is called a primal--dual stationarity condition.
KKT conditions are the standard example.
\end{definition}

\begin{remark}
Chapter 3 begins with primal stationarity for MPEC and then, under
appropriate MPEC-specific constraint qualifications, derives a more
computable primal--dual form.
\end{remark}

\subsection{The role of the chapter-specific objects}

The following terms in Chapter 3 are specializations of the background
concepts above.

\begin{definition}[Lower-level multiplier set]
The set $M(z)$ collects multipliers associated with the KKT system of the
lower-level variational inequality at a feasible point $z=(x,y)$.
\end{definition}

\begin{definition}[Lifted critical cone]
The lifted critical cone $K(z,\lambda)$ is an MPEC-specific cone built from
active lower-level constraints and a lower-level multiplier $\lambda$. It
refines the usual linearized-cone idea.
\end{definition}

\begin{definition}[Directional critical set]
For a fixed upper-level direction $dx$, the set $K(z,\lambda;dx)$ collects
the lower-level directions $dy$ compatible with that $dx$.
\end{definition}

\begin{definition}[Affine variational inequality arising from linearization]
After linearizing the lower-level VI at a feasible point, one obtains an AVI
in the variable $dy$. Solving that AVI describes candidate tangent
directions of the MPEC.
\end{definition}

\begin{remark}
So the chapter replaces the single linearized cone of ordinary NLP by a
family of cones and AVIs indexed by lower-level multipliers.
\end{remark}

\section{The MPEC Model and Stationarity}

\begin{definition}[MPEC]
Let $f:\R^{n+m}\to\R$, let $Z\subseteq \R^{n+m}$, and define
\[
C(x):=\{\,y\in\R^m:\ g(x,y)\le 0\,\}.
\]
Let
\[
S(x):=\SOL(F(x,\cdot),C(x)).
\]
The mathematical program with equilibrium constraints is
\begin{equation}
\label{eq:MPEC}
\begin{aligned}
\min_{x,y}\quad & f(x,y)\\
\text{s.t.}\quad & (x,y)\in Z,\\
& y\in S(x).
\end{aligned}
\end{equation}
Its feasible set is denoted by $\Fcal$.
\end{definition}

\begin{definition}[Multiplier set]
For a feasible point $z=(x,y)\in \Fcal$, define the active index set
\[
\Ical(z):=\{\,i:\ g_i(z)=0\,\}.
\]
The multiplier set of the lower-level variational inequality is
\[
M(z):=
\left\{
\lambda\in\R^\ell:
F(z)+\sum_{i=1}^{\ell}\lambda_i\nabla_y g_i(z)=0,\ 
\lambda\ge 0,\ 
g(z)\le 0,\ 
\lambda^T g(z)=0
\right\}.
\]
\end{definition}

\begin{definition}[Stationary point]
A feasible point $\bar z=(\bar x,\bar y)\in\Fcal$ is called a
\emph{stationary point} of \eqref{eq:MPEC} if
\[
\nabla_x f(\bar z)^Tdx+\nabla_y f(\bar z)^Tdy\ge 0
\qquad \forall (dx,dy)\in T(\bar z;\Fcal),
\]
where $T(\bar z;\Fcal)$ is the tangent cone to the feasible set.
\end{definition}

\begin{remark}
Every local minimizer of the MPEC is stationary in the above sense.
\end{remark}

We give an example to show why the standard NLP viewpoint fails.
\begin{example}[Nonconvex tangent cone]
Consider an MPEC whose feasible region near $\bar z$ is the union of two
smooth branches. Then
\[
T(\bar z;\Fcal)=K_1\cup K_2
\]
for two polyhedral cones $K_1,K_2$, so the tangent cone is nonconvex.
This already shows that a single convex NLP-type linearized cone cannot
capture the correct first-order geometry.
\end{example}

\subsection{The Tangent Cone and the MPEC Linearization}

Fix a feasible point $\bar z=(\bar x,\bar y)\in\Fcal$.

\begin{definition}[SBCQ]
The \emph{sequence boundedness constraint qualification} (SBCQ) holds at
$\bar z$ if for every sequence $\{(x^k,y^k)\}\subset \Fcal$ converging to
$\bar z$, there exists for each $k$ a multiplier $\lambda^k\in M(x^k,y^k)$
such that the sequence $\{\lambda^k\}$ is bounded.
\end{definition}

\begin{definition}[VI Lagrangian]
The Lagrangian of the lower-level variational inequality is
\[
L(z,\lambda):=F(z)+\sum_{i=1}^{\ell}\lambda_i\nabla_y g_i(z),
\qquad (z,\lambda)\in \R^{n+m+\ell}.
\]
\end{definition}

\begin{definition}[Lifted critical cone]
For $\lambda\in M(\bar z)$, define the \emph{lifted critical cone}
\[
\Kcal(\bar z,\lambda):=
\left\{
(dx,dy)\in\R^{n+m}:
\begin{array}{l}
dx^T\nabla_x g_i(\bar z)+dy^T\nabla_y g_i(\bar z)\le 0,
\quad i\in\Ical(\bar z),\ \lambda_i=0,\\[0.3em]
dx^T\nabla_x g_i(\bar z)+dy^T\nabla_y g_i(\bar z)=0,
\quad i\in\Ical(\bar z),\ \lambda_i>0
\end{array}
\right\}.
\]
\end{definition}

\begin{definition}[Directional critical set]
For $dx\in\R^n$ and $\lambda\in M(\bar z)$, define
\[
\Kcal(\bar z,\lambda;dx):=
\{\,dy\in\R^m:(dx,dy)\in \Kcal(\bar z,\lambda)\,\}.
\]
\end{definition}

\begin{definition}[Set-valued linearization map]
For each $\lambda\in M(\bar z)$, define
\[
\mathscr{L}\mathscr{S}_{(\bar z,\lambda)}(dx):=
\SOL\bigl(\nabla_xL(\bar z,\lambda)\,dx,\,
          \nabla_yL(\bar z,\lambda),\,
          \Kcal(\bar z,\lambda;dx)\bigr).
\]
\end{definition}

\begin{definition}[MPEC linearized cone]
The \emph{MPEC linearized cone} at $\bar z$ is
\[
\Lcal(\bar z;\Fcal):=
T(\bar z;Z)\cap
\left(
\bigcup_{\lambda\in M(\bar z)}
\Gr(\mathscr{L}\mathscr{S}_{(\bar z,\lambda)})
\right).
\]
\end{definition}

\begin{remark}
The cone $\Lcal(\bar z;\Fcal)$ is generally nonconvex. In many cases it is a
union of finitely many polyhedral cones.
\end{remark}

\begin{lemma}[Tangent directions induce an affine VI]
\label{lem:321}
Assume SBCQ holds at $\bar z\in\Fcal$. If
\[
dz=(dx,dy)\in T(\bar z;\Fcal),
\]
then $dz\in T(\bar z;Z)$ and there exists a multiplier
$\bar\lambda\in M(\bar z)$ such that $dy$ solves the AVI
\[
\AVI\bigl(\nabla_xL(\bar z,\bar\lambda)dx,\,
          \nabla_yL(\bar z,\bar\lambda),\,
          \Kcal(\bar z,\bar\lambda;dx)\bigr).
\]
Equivalently,
\[
T(\bar z;\Fcal)\subseteq \Lcal(\bar z;\Fcal).
\]
\end{lemma}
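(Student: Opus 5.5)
We fix a tangent direction $dz=(dx,dy)\in T(\bar z;\Fcal)$ together with a realizing sequence $z^k=(x^k,y^k)\in\Fcal$, $z^k\to\bar z$, $t_k\downarrow 0$, $(z^k-\bar z)/t_k\to dz$. Since $\Fcal\subseteq Z$, the same sequence shows $dz\in T(\bar z;Z)$, so the first claim is immediate. Throughout we assume, as is implicit in the setting, that $F$ is $C^1$ and $g$ is $C^2$, so that $L$ is $C^1$ in $z$. By SBCQ we pick bounded multipliers $\lambda^k\in M(z^k)$. Passing to a subsequence, $\lambda^k\to\bar\lambda$. By continuity of $F$, $g$ and $\nabla_y g$, the relations defining $M$ pass to the limit, so $\bar\lambda\in M(\bar z)$. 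We may also refine the subsequence so that the index sets $\Ical(z^k)$ and $\{i:\lambda^k_i>0\}$ are constant, say $I'$ and $J'$. Continuity then gives $I'\subseteq\Ical(\bar z)$ and $\{i:\bar\lambda_i>0\}\subseteq J'\subseteq I'$.

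\emph{Feasibility: $dy\in\Kcal(\bar z,\bar\lambda;dx)$.} For $i\in\Ical(\bar z)$ we have $g_i(z^k)\le 0=g_i(\bar z)$. Dividing by $t_k$ and applying Taylor's theorem gives $\nabla g_i(\bar z)^Tdz\le 0$. If moreover $\bar\lambda_i>0$, then $\lambda^k_i>0$ for large $k$, so $g_i(z^k)=0$, and the same limit gives equality. Hence $(dx,dy)\in\Kcal(\bar z,\bar\lambda)$.

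\emph{Variational inequality.} We must show
\[
\bigl(\nabla_xL(\bar z,\bar\lambda)dx+\nabla_yL(\bar z,\bar\lambda)dy\bigr)^T(v-dy)\ge 0\quad\forall v\in\Kcal(\bar z,\bar\lambda;dx).
\]
Since $\Kcal(\bar z,\bar\lambda;dx)$ is a polyhedron, this is equivalent to a KKT system for the AVI. Concretely, we need $\mu$ with the following properties:
\begin{itemize}
\item $\nabla_zL(\bar z,\bar\lambda)dz+\sum_{i\in\Ical(\bar z)}\mu_i\nabla_y g_i(\bar z)=0$;
\item $\mu_i$ is free for $\bar\lambda_i>0$;
\item $\mu_i\ge0$ and $\mu_i\nabla g_i(\bar z)^Tdz=0$ for active $i$ with $\bar\lambda_i=0$.
\end{itemize}
The natural candidate is $\mu=\lim(\lambda^k-\bar\lambda)/t_k$. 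To get it, subtract $L(z^k,\lambda^k)=0$ and $L(\bar z,\bar\lambda)=0$:
\[
0=L(z^k,\bar\lambda)-L(\bar z,\bar\lambda)+\sum_i(\lambda^k_i-\bar\lambda_i)\nabla_y g_i(z^k).
\]
Dividing by $t_k$, the first difference tends to $\nabla_zL(\bar z,\bar\lambda)dz$. The trouble is that $(\lambda^k-\bar\lambda)/t_k$ need not be bounded, and $\bar\lambda$ itself is arbitrary among limits.

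\emph{Main obstacle, and a fix.} To avoid this difficulty we replace $\bar\lambda$ by a better multiplier. For each $k$, consider the polyhedral set $M_{J'}(\bar z)$ of multipliers in $M(\bar z)$ supported on $J'$. It contains $\bar\lambda$, hence is nonempty. By Hoffman's error bound applied to the linear system $\{\lambda_{J'}\ge0,\ \lambda_{i}=0\ (i\notin J'),\ F(\bar z)+\sum\lambda_i\nabla_yg_i(\bar z)=0\}$, there exists $\hat\lambda^k\in M_{J'}(\bar z)$ with $\|\lambda^k-\hat\lambda^k\|\le c\,\|F(\bar z)+\sum_i\lambda^k_i\nabla_yg_i(\bar z)\|$. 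This residual equals the difference of $L$ evaluated at $\bar z$ and at $z^k$ (with multiplier $\lambda^k$), so it is $O(t_k)$ by Lipschitz continuity of $L$ and the boundedness of $\lambda^k$. Thus $(\lambda^k-\hat\lambda^k)/t_k$ is bounded. The $\hat\lambda^k$ range over finitely many faces of a polyhedron; by writing $\hat\lambda^k$ as a projection we can take them bounded and, after a further subsequence, convergent to some $\bar\lambda$. Since $M_{J'}(\bar z)$ is closed, $\bar\lambda$ lies in it, and the face structure lets us arrange $\hat\lambda^k=\bar\lambda$ along the subsequence. We then redo the feasibility step with this $\bar\lambda$; it still applies because $\mathrm{supp}\,\bar\lambda\subseteq J'$. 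Set $\mu=\lim(\lambda^k-\bar\lambda)/t_k$ along a further subsequence. Passing to the limit in the displayed identity gives the stationarity equation. Moreover $\mu_i=0$ for $i\notin I'$, and $\mu_i=\lim\lambda^k_i/t_k\ge0$ when $\bar\lambda_i=0$. Complementarity follows from two facts. If $\mu_i>0$ then $i\in J'$, so $g_i(z^k)=0$, which forces $\nabla g_i(\bar z)^Tdz=0$. For $i\in I'\setminus J'$ with $\bar\lambda_i=0$ we get $\mu_i=0$. This yields the KKT system, hence $dy\in\SOL(\cdots)$ and $dz\in\Lcal(\bar z;\Fcal)$. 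The Hoffman re-selection of $\bar\lambda$ is the step I expect to require the most care.
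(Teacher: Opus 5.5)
Your setup, the feasibility step $dz\in\Kcal(\bar z,\bar\lambda)$, and the reduction of the AVI to a KKT system are fine. The proof breaks at the Hoffman re-selection: nothing lets you ``arrange $\hat\lambda^k=\bar\lambda$ along the subsequence.'' The Hoffman points $\hat\lambda^k\in M_{J'}(\bar z)$ converge to $\lim_k\lambda^k$, but at a rate you do not control. Lying in a common face does not make them equal.

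Here is a concrete instance, in the setting of computational Question~4 with $Z$ the whole space. Take $z^k=((2+t_k,0),(1,0))\in\Fcal$ and the bounded multipliers $\lambda^k=(\tfrac14+\sqrt{t_k},\,\tfrac12+t_k-2\sqrt{t_k})\in M(z^k)$. Then $\|\lambda^k-\lim_j\lambda^j\|\approx\sqrt{5t_k}$, so no choice of $\hat\lambda^k$ with $\|\lambda^k-\hat\lambda^k\|=O(t_k)$ is eventually constant. Moreover, $\|\lambda^k-\bar\lambda\|/t_k\to\infty$ for \emph{every} fixed $\bar\lambda\in M(\bar z)$, so your $\mu=\lim(\lambda^k-\bar\lambda)/t_k$ need not exist.

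The obvious patches also fail. If you keep $\hat\lambda^k$ varying and set $\mu=\lim(\lambda^k-\hat\lambda^k)/t_k$, you lose the sign condition $\mu_i\ge0$ where $\bar\lambda_i=0$, because $\hat\lambda^k_i$ may exceed $\lambda^k_i$ there. If you pick some other $\bar\lambda$ with larger support, the second-order term that appears in the limit is evaluated at $\lim_k\lambda^k$, not at the $\bar\lambda$ defining $\Kcal(\bar z,\bar\lambda)$.

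The detour is also unnecessary, which is why the paper's sketch simply keeps $\bar\lambda=\lim_k\lambda^k$. Put $c^k=(\lambda^k-\bar\lambda)/t_k$. This may be unbounded, but $t_kc^k\to0$ while $\|\nabla_y g_i(z^k)-\nabla_y g_i(\bar z)\|=O(t_k)$. Hence $\sum_i c^k_i\bigl(\nabla_y g_i(z^k)-\nabla_y g_i(\bar z)\bigr)\to0$, and your displayed identity gives $\sum_{i\in J'}c^k_i\nabla_y g_i(\bar z)\to-\nabla_zL(\bar z,\bar\lambda)dz$.

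Since $c^k_i=\lambda^k_i/t_k\ge0$ whenever $\bar\lambda_i=0$, these vectors lie in the cone $\{\sum_{i\in J'}\mu_i\nabla_y g_i(\bar z):\mu_i\ge0\ \text{if}\ \bar\lambda_i=0\}$. Its generators are fixed, so it is finitely generated and hence closed. The limit is therefore represented by a $\mu$ with exactly the signs you need. Complementarity holds because $\nabla g_i(\bar z)^Tdz=0$ for $i\in J'$.

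Equivalently, pair with $v-dy$ for $v\in\Kcal(\bar z,\bar\lambda;dx)$. Every term $c^k_i\nabla_y g_i(\bar z)^T(v-dy)$ is $\le0$, and passing to the limit gives the AVI inequality directly. The missing idea is closedness of a cone with fixed generators, not Hoffman's bound.
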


\begin{proof}[Proof sketch]
Take a tangent sequence $z^k=(x^k,y^k)\in\Fcal$ converging to $\bar z$
with difference quotients converging to $(dx,dy)$. By SBCQ one may
choose bounded multipliers $\lambda^k\in M(z^k)$; after extracting a
subsequence, $\lambda^k\to\bar\lambda\in M(\bar z)$. First-order expansion
of the active constraints yields $(dx,dy)\in\Kcal(\bar z,\bar\lambda)$, and
first-order expansion of the VI equation yields the stated AVI.
\end{proof}

\begin{definition}[Extreme-point cone]
Let $M^e(\bar z)$ denote the set of extreme points of $M(\bar z)$. Define
\[
\Lcal^e(\bar z;\Fcal):=
T(\bar z;Z)\cap
\left(
\bigcup_{\lambda\in M^e(\bar z)}
\Gr(\mathscr{L}\mathscr{S}_{(\bar z,\lambda)})
\right).
\]
\end{definition}

\begin{proposition}
\label{prop:322}
If CRCQ holds at $\bar z\in\Fcal$, then
\[
T(\bar z;\Fcal)\subseteq \Lcal^e(\bar z;\Fcal).
\]
If, in addition, $T(\bar z;Z)$ is polyhedral, then
$\Lcal^e(\bar z;\Fcal)$ is the union of finitely many polyhedral cones.
\end{proposition}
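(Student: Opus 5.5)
The plan is to reduce to Lemma~\ref{lem:321}. That lemma gives, for each tangent direction, some multiplier $\bar\lambda\in M(\bar z)$; the task is to replace $\bar\lambda$ by an extreme point of $M(\bar z)$. Here CRCQ means the constant rank constraint qualification for the lower-level constraints $g(\cdot,\cdot)$ with respect to the gradients $\nabla_y g_i$ (and jointly in $(x,y)$) of the active constraints near $\bar z$.

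\textbf{Step 1: CRCQ implies SBCQ.} Take any sequence $z^k\to\bar z$ in $\Fcal$. For large $k$ one has $\Ical(z^k)\subseteq\Ical(\bar z)$. By a Carath\'eodory-type argument, $M(z^k)$ contains a multiplier $\lambda^k$ whose support $J_k$ has linearly independent gradients $\{\nabla_y g_i(z^k)\}_{i\in J_k}$. Passing to a subsequence, we may take $J_k\equiv J$ constant. CRCQ guarantees that linear independence of $\{\nabla_y g_i\}_{i\in J}$ persists at the limit. This is the main point where constant rank is used: a family dependent at $\bar z$ would have rank jumping down, contradicting constant rank on the nearby family. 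Hence the coefficients $\lambda^k_J$ are obtained by a left-inverse of a matrix converging to a full-column-rank matrix, so they are bounded. SBCQ therefore holds, and Lemma~\ref{lem:321} applies.

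\textbf{Step 2: choose an extreme multiplier.} Rerun the proof of Lemma~\ref{lem:321} with the multipliers just constructed. The limit $\bar\lambda$ is supported on $J$, and the gradients $\{\nabla_y g_i(\bar z)\}_{i\in J}$ are linearly independent. A point of the polyhedron $M(\bar z)$ whose positive components correspond to linearly independent columns is a vertex. Hence $\bar\lambda\in M^e(\bar z)$.

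The delicate part is the AVI. The lemma's expansion of $L(z^k,\lambda^k)=0$ yields
\[
\nabla_xL\,dx+\nabla_yL\,dy+\textstyle\sum_i \mu_i\nabla_y g_i(\bar z)=0,
\]
with $\mu=\lim (\lambda^k-\bar\lambda)/t_k$. This limit need not exist without CRCQ. Under CRCQ, the constant-rank theorem lets me express the difference of multipliers on the fixed support $J$ through the smooth left-inverse, giving $\mu_J$ bounded. Moreover $\mu_i\ge0$ whenever $\bar\lambda_i=0$ and $dy$ makes constraint $i$ active to first order, which is the KKT form of the AVI over $\Kcal(\bar z,\bar\lambda;dx)$. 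The membership $dz\in T(\bar z;Z)$ comes directly from the lemma. I expect this to be the main obstacle: carefully justifying boundedness of $(\lambda^k-\bar\lambda)/t_k$ and the correct sign pattern via constant rank.

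\textbf{Step 3: finiteness and polyhedrality.} $M(\bar z)$ is a polyhedron, so $M^e(\bar z)$ is finite. For fixed $\lambda$, $\Gr(\mathscr{L}\mathscr{S}_{(\bar z,\lambda)})$ is the solution set of an AVI whose data depend linearly on $dx$, over a polyhedral cone depending linearly on $dx$. Writing the AVI through its KKT system and enumerating complementarity index patterns expresses this graph as a finite union of polyhedral cones in $(dx,dy)$, after projecting out multipliers; projections of polyhedral cones are polyhedral. Intersecting with the polyhedral cone $T(\bar z;Z)$ and taking the finite union over $M^e(\bar z)$ gives a finite union of polyhedral cones.
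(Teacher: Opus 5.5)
Your proposal is correct and is essentially the paper's argument: take a vertex multiplier $\lambda^k$ of $M(z^k)$, use CRCQ to transfer linear independence of the support gradients $\{\nabla_y g_i\}_{i\in J}$ to $\bar z$ (which gives boundedness and a limit $\bar\lambda\in M^e(\bar z)$), pass to the limit as in Lemma~\ref{lem:321}, and obtain polyhedrality from the finiteness of $M^e(\bar z)$ together with the KKT enumeration of each AVI graph. The ``main obstacle'' you flag in Step~2 is not really one: the limit passage only uses the aggregated vector $t_k^{-1}\sum_i(\lambda^k_i-\bar\lambda_i)\nabla_y g_i(\bar z)\to-\bigl(\nabla_xL(\bar z,\bar\lambda)dx+\nabla_yL(\bar z,\bar\lambda)dy\bigr)$, paired termwise with $v-dy$ for $v\in\Kcal(\bar z,\bar\lambda;dx)$, so boundedness of $(\lambda^k-\bar\lambda)/t_k$ is never needed (although your fixed-support argument for it is valid).
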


\subsection{Critical Multipliers and Directional Structure}

\begin{definition}[Critical multipliers]
For a given direction $dx\in\R^n$, define $M^c(\bar z;dx)$ as the set of
optimal solutions of the linear program
\[
\max_{\lambda\in M(\bar z)}
\sum_{i=1}^{\ell}\lambda_i\,dx^T\nabla_x g_i(\bar z).
\]
\end{definition}

\begin{proposition}
\label{prop:323}
Let $\lambda\in M(\bar z)$. Then
\[
\Kcal(\bar z,\lambda;dx)\neq \varnothing
\quad\Longleftrightarrow\quad
\lambda\in M^c(\bar z;dx).
\]
Moreover, for every $\lambda\in M^c(\bar z;dx)$, the set
$\Kcal(\bar z,\lambda;dx)$ is independent of $\lambda$ and coincides with
the optimal solution set of the dual linear program
\[
\min_{dy\in\R^m}\ dy^T F(\bar z)
\quad\text{s.t.}\quad
dx^T\nabla_x g_i(\bar z)+dy^T\nabla_y g_i(\bar z)\le 0,
\quad i\in \Ical(\bar z).
\]
\end{proposition}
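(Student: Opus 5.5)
This is an LP duality argument. Fix $dx$ and write $a_i:=dx^T\nabla_x g_i(\bar z)$ and $b_i:=\nabla_y g_i(\bar z)$ for $i\in\Ical(\bar z)$. Complementary slackness in the definition of $M(\bar z)$ forces $\lambda_i=0$ for inactive $i$. Hence
\[
M(\bar z)=\Bigl\{\lambda\ge 0:\ F(\bar z)+\textstyle\sum_{i\in\Ical(\bar z)}\lambda_i b_i=0,\ \lambda_i=0\ (i\notin\Ical(\bar z))\Bigr\},
\]
and the objective of the critical-multiplier LP is $\sum_{i\in\Ical(\bar z)}\lambda_i a_i$.

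The plan is to recognise the problem
\[
\min_{dy}\ dy^TF(\bar z)\quad\text{s.t.}\quad a_i+b_i^Tdy\le 0,\ i\in\Ical(\bar z)
\]
as the Lagrangian dual of that LP. The Lagrangian of the minimisation is $dy^TF(\bar z)+\sum_i\lambda_i(a_i+b_i^Tdy)$ with $\lambda\ge0$. Its infimum over $dy$ is finite exactly when $F(\bar z)+\sum_i\lambda_i b_i=0$, i.e. when $\lambda\in M(\bar z)$, and then it equals $\sum_i\lambda_i a_i$. So the two problems are a primal--dual LP pair. We assume $M(\bar z)\neq\varnothing$, since the statement presupposes $\lambda\in M(\bar z)$. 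Weak duality then gives, for every $\lambda\in M(\bar z)$ and every primal-feasible $dy$,
\[
dy^TF(\bar z)=\sum_i\lambda_i a_i-\sum_i\lambda_i\,(a_i+b_i^Tdy)\ \ge\ \sum_i\lambda_i a_i .
\]
This identity is the key computation, and it follows from $F(\bar z)=-\sum_i\lambda_ib_i$.

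For ($\Leftarrow$), take $\lambda\in M^c(\bar z;dx)$. The LP is feasible, and it has an optimal solution by definition of $M^c$. By LP strong duality the primal then has an optimal $dy$ with equal value. The identity above makes the complementary terms $\lambda_i(a_i+b_i^Tdy)$ sum to zero. Each term is $\le0$, so each vanishes. Therefore $a_i+b_i^Tdy=0$ whenever $\lambda_i>0$, and $\le0$ otherwise. This says exactly $dy\in\Kcal(\bar z,\lambda;dx)$, which is therefore nonempty.

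For ($\Rightarrow$) and the characterisation, take any $dy\in\Kcal(\bar z,\lambda;dx)$. It is primal feasible, and the complementary terms vanish by the definition of $\Kcal$. So $dy^TF(\bar z)=\sum_i\lambda_i a_i$, and weak duality makes $dy$ primal optimal and $\lambda$ dual optimal, i.e. $\lambda\in M^c(\bar z;dx)$. Conversely, the argument in the previous paragraph shows that every primal-optimal $dy$ lies in $\Kcal(\bar z,\lambda;dx)$ for every $\lambda\in M^c(\bar z;dx)$. Thus $\Kcal(\bar z,\lambda;dx)$ equals the primal optimal solution set, which does not depend on $\lambda$.

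The only delicate point is justifying strong duality. Primal feasibility is not assumed in advance. What we use is that a feasible LP with a finite optimum has a dual with an attained optimal solution of the same value. Since $M^c$ is defined as the set of maximisers, membership of $\lambda$ already gives a finite optimum of the multiplier LP. Standard LP duality then yields primal feasibility and attainment, so no constraint qualification is needed, which is a feature of the linear setting.
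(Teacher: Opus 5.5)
Your proof is correct and takes the same approach as the paper. Like the paper's sketch, you pair the multiplier LP defining $M^c(\bar z;dx)$ with the $dy$-program as an LP primal--dual pair, and you use complementary slackness and strong duality (attainment of the other problem's optimum) in both directions. You simply write out the details, namely the identity $dy^TF(\bar z)=\sum_i\lambda_ia_i-\sum_i\lambda_i(a_i+b_i^Tdy)$. The only cosmetic difference is that you call the $dy$-program the primal while the paper calls it the dual, which does not matter since LP duality is symmetric.
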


\begin{proof}[Idea]
If $dy\in\Kcal(\bar z,\lambda;dx)$, then the complementarity relations give
the slackness conditions needed for optimality of $\lambda$ in the primal
LP. Conversely, if $\lambda$ is primal-optimal, then an optimal dual
solution $dy$ exists and complementary slackness implies
$dy\in \Kcal(\bar z,\lambda;dx)$.
\end{proof}

\subsection{Constraint Qualifications for MPEC}

\begin{definition}[Basic, extreme, and full CQ]
We say that the \emph{basic constraint qualification} holds at
$\bar z\in\Fcal$ if there exists a nonempty subset $M'\subseteq M(\bar z)$
such that
\[
T(\bar z;\Fcal)=\Lcal'(\bar z;\Fcal),
\]
where
\[
\Lcal'(\bar z;\Fcal):=
T(\bar z;Z)\cap
\left(
\bigcup_{\lambda\in M'}
\Gr(\mathscr{L}\mathscr{S}_{(\bar z,\lambda)})
\right).
\]
If this holds with $M'=M^e(\bar z)$, we call it the \emph{extreme CQ}. If
it holds with $M'=M(\bar z)$, we call it the \emph{full CQ}.
\end{definition}

\begin{remark}
If $M(\bar z)$ is a singleton, then the basic, extreme, and full CQs
coincide.
\end{remark}

\subsection{Stationarity Under the Full CQ}

\begin{corollary}[Equivalent primal characterizations of stationarity]
\label{cor:331}
Assume SBCQ holds at $\bar z\in\Fcal$ and the full CQ holds at $\bar z$,
that is,
\[
\Lcal(\bar z;\Fcal)=T(\bar z;\Fcal).
\]
Then the following are equivalent:
\begin{enumerate}[label=\textnormal{(\alph*)}]
\item $\bar z$ is a stationary point of the MPEC;
\item
\[
dz\in \Lcal(\bar z;\Fcal)\Longrightarrow \nabla f(\bar z)^Tdz\ge 0;
\]
\item $dz=0$ is an optimal solution of
\[
\min_{dz}\ \nabla f(\bar z)^Tdz
\qquad\text{s.t.}\qquad dz\in \Lcal(\bar z;\Fcal);
\]
\item for every scalar $\alpha\ge \|\nabla f(\bar z)\|$ and every
$dz\in\R^{n+m}$,
\[
\nabla f(\bar z)^Tdz+\alpha\,\dist(dz,\Lcal(\bar z;\Fcal))\ge 0.
\]
\end{enumerate}
\end{corollary}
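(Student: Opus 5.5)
The plan is to prove the chain (a)$\Leftrightarrow$(b)$\Leftrightarrow$(c) directly, and then (b)$\Leftrightarrow$(d), using only that $\Lcal:=\Lcal(\bar z;\Fcal)$ is a cone. By the full CQ, $\Lcal=T(\bar z;\Fcal)$. SBCQ enters only through Lemma~\ref{lem:321}, which gives the inclusion $T(\bar z;\Fcal)\subseteq\Lcal$ that is half of the assumed equality.

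(a)$\Leftrightarrow$(b) is immediate. Stationarity is the statement $\nabla f(\bar z)^Tdz\ge 0$ for all $dz\in T(\bar z;\Fcal)$, and under the full CQ this set equals $\Lcal$.

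For (b)$\Leftrightarrow$(c), first check that $0\in\Lcal$. It lies in $T(\bar z;\Fcal)$, via the constant sequence $z^k=\bar z$ with any $t_k\downarrow 0$. So $dz=0$ is feasible for the problem in (c), with objective value $0$. Then $0$ is optimal exactly when every feasible $dz$ satisfies $\nabla f(\bar z)^Tdz\ge 0$, which is (b).

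For (b)$\Rightarrow$(d), fix $dz$ and $\alpha\ge\|\nabla f(\bar z)\|$. The cone $\Lcal=T(\bar z;\Fcal)$ is closed, as every tangent cone is, so there is a nearest point $p\in\Lcal$ with $\|dz-p\|=\dist(dz,\Lcal)$; a minimizing sequence would also do if one wants to avoid closedness. Then
\[
\nabla f(\bar z)^Tdz=\nabla f(\bar z)^Tp+\nabla f(\bar z)^T(dz-p)\ge 0-\|\nabla f(\bar z)\|\,\|dz-p\|\ge -\alpha\,\dist(dz,\Lcal),
\]
where the first inequality uses (b) on $p$ and Cauchy--Schwarz on the remainder. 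This is (d).

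For (d)$\Rightarrow$(b), take $dz\in\Lcal$. Then $\dist(dz,\Lcal)=0$, and (d) with $\alpha=\|\nabla f(\bar z)\|$ gives $\nabla f(\bar z)^Tdz\ge 0$.

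The only delicate point is the existence of the projection $p$ in (b)$\Rightarrow$(d), since $\Lcal$ is nonconvex. Nonconvexity is harmless here: no uniqueness or variational characterization of $p$ is needed. If one prefers to avoid closedness entirely, take $p_j\in\Lcal$ with $\|dz-p_j\|\to\dist(dz,\Lcal)$, apply the same estimate to each $p_j$, and pass to the limit.
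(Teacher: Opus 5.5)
Your proposal is correct and follows essentially the same route as the paper: (a)$\Leftrightarrow$(b)$\Leftrightarrow$(c) directly from the full CQ, and the nontrivial direction into (d) via a nearest point $p\in\Lcal(\bar z;\Fcal)$ combined with Cauchy--Schwarz, with the converse obtained by setting the distance to $0$. You also justify two details the paper leaves implicit: that $0\in\Lcal(\bar z;\Fcal)$, and that the nearest point exists because the tangent cone is closed (or, alternatively, by using a minimizing sequence).
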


\begin{proof}
The equivalence of (a) and (b) follows directly from the definition of
stationarity and the equality $T(\bar z;\Fcal)=\Lcal(\bar z;\Fcal)$. The
equivalence of (b) and (c) is immediate. The implication (d)$\Rightarrow$(c)
is obvious. For (c)$\Rightarrow$(d), choose $dz'\in\Lcal(\bar z;\Fcal)$ with
\[
\|dz-dz'\|=\dist(dz,\Lcal(\bar z;\Fcal)).
\]
Then
\[
\nabla f(\bar z)^Tdz+\alpha\,\dist(dz,\Lcal(\bar z;\Fcal))
\ge
\nabla f(\bar z)^Tdz'+(\alpha-\|\nabla f(\bar z)\|)\|dz-dz'\|\ge 0.
\qedhere
\]
\end{proof}

\subsubsection{Primal--dual stationarity}

Assume now that the upper-level feasible set is polyhedral:
\[
Z=\{(x,y)\in\R^{n+m}: Gx+Hy+a\le 0\},
\]
where $G\in\R^{s\times n}$, $H\in\R^{s\times m}$, and $a\in\R^s$.

\begin{definition}[Degenerate and positive multiplier index sets]
For $\lambda\in M(\bar z)$, define
\[
I_0(\bar z,\lambda):=\{\,i\in\Ical(\bar z):\lambda_i=0\,\},
\qquad
I_+(\bar z,\lambda):=\{\,i\in\Ical(\bar z):\lambda_i>0\,\}.
\]
\end{definition}

\begin{theorem}[Primal--dual characterization under the full CQ]
\label{thm:334}
Under the assumptions of Corollary~\ref{cor:331}, $\bar z$ is a stationary
point of \eqref{eq:MPEC} if and only if for each vector
$\lambda\in M(\bar z)$ and each pair of index sets
$\alpha,\bar\alpha$ partitioning $I_0(\bar z,\lambda)$, there exist
multipliers
\[
\zeta\in\R^s,\qquad \eta\in\R^\ell,\qquad \pi\in\R^m
\]
such that
\begin{align*}
\nabla_x f(\bar z)+G^T\zeta+\nabla_x g(\bar z)^T\eta
&=
\nabla_x L(\bar z,\lambda)^T\pi,
\\
\nabla_y f(\bar z)+H^T\zeta+\nabla_y g(\bar z)^T\eta
&=
\nabla_y L(\bar z,\lambda)^T\pi,
\\
\pi^T\nabla_y g_i(\bar z) &\le 0,
\qquad i\in\alpha,
\\
\pi^T\nabla_y g_i(\bar z) &= 0,
\qquad i\in I_+(\bar z,\lambda),
\\
\eta_i &\ge 0,
\qquad i\in\bar\alpha,
\\
\eta_i &= 0,
\qquad i\notin \Ical(\bar z),
\\
\zeta &\ge 0,
\qquad
\zeta^T(G\bar x+H\bar y+a)=0.
\end{align*}
\end{theorem}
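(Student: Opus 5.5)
By Corollary~\ref{cor:331}, stationarity of $\bar z$ is equivalent to
\[
\nabla f(\bar z)^Tdz\ge 0\qquad \forall\, dz\in\Lcal(\bar z;\Fcal).
\]
Since $\Lcal(\bar z;\Fcal)$ is a union over $\lambda\in M(\bar z)$, this holds iff, for each fixed $\lambda$, the direction $dz=0$ minimizes $\nabla f(\bar z)^Tdz$ over
\[
T(\bar z;Z)\cap\Gr(\mathscr{L}\mathscr{S}_{(\bar z,\lambda)}).
\]
Because $Z$ is polyhedral, $T(\bar z;Z)=\{(dx,dy): G_jdx+H_jdy\le 0,\ j\in J(\bar z)\}$, where $J(\bar z)$ is the active row set. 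I will therefore fix $\lambda$ and describe the graph of the AVI solution map piece by piece.

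First I make the AVI explicit. Its feasible set $\Kcal(\bar z,\lambda;dx)$ is polyhedral, with inequalities indexed by $I_0:=I_0(\bar z,\lambda)$ and equalities indexed by $I_+:=I_+(\bar z,\lambda)$. Hence $dy$ solves the AVI iff there exist multipliers $\mu_i$ such that
\[
\nabla_xL\,dx+\nabla_yL\,dy+\sum_{i\in\Ical(\bar z)}\mu_i\nabla_y g_i(\bar z)=0,
\]
\[
\mu_i\ge 0,\quad \nabla g_i(\bar z)^Tdz\le 0,\quad \mu_i\,\nabla g_i(\bar z)^Tdz=0\qquad (i\in I_0),
\]
\[
\nabla g_i(\bar z)^Tdz=0,\quad \mu_i \text{ free}\qquad (i\in I_+).
\]
This is a linear complementarity description, so we need no CQ; KKT is exact for the polyhedral AVI. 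Splitting $I_0$ into $\alpha$ (where $\nabla g_i^Tdz=0$ and $\mu_i\ge0$) and $\bar\alpha$ (where $\mu_i=0$ and $\nabla g_i^Tdz\le 0$), the graph intersected with $T(\bar z;Z)$ is the projection onto $dz$ of the union over partitions $(\alpha,\bar\alpha)$ of polyhedral cones $P_{\alpha}$ in $(dz,\mu)$. (Boundary cases are double-counted harmlessly.)

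Stationarity for this $\lambda$ is then equivalent to: for every partition, $0$ minimizes $\nabla f^Tdz$ over $P_\alpha$. Since $P_\alpha$ is a polyhedral cone, this LP has optimal value $0$, so LP duality yields multipliers, and conversely such multipliers imply $\nabla f^Tdz\ge 0$ on $P_\alpha$. I dualize with
\begin{align*}
&\zeta\ge0 \text{ on the active rows of } Z \text{ (set to zero elsewhere, giving } \zeta^T(G\bar x+H\bar y+a)=0),\\
&-\pi\in\R^m \text{ for the equation } \nabla_xL\,dx+\nabla_yL\,dy+\sum\mu_i\nabla_y g_i=0,\\
&\eta_i \text{ for the constraints } \nabla g_i^Tdz\ (\le 0 \text{ or } =0), \quad \eta_i=0 \text{ for } i\notin\Ical(\bar z).
\end{align*}
The constraints on $\eta$ are: $\eta_i\ge0$ for the inequalities, i.e.\ $i\in\bar\alpha$, and $\eta_i$ free for $i\in\alpha\cup I_+$. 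The dual feasibility condition for the $dz$ components gives the two displayed gradient equations, with $\nabla_x L^T\pi$ and $\nabla_y L^T\pi$ on the right. The dual conditions for the $\mu$ components give $\pi^T\nabla_y g_i\le 0$ for $i\in\alpha$ (since $\mu_i\ge0$ there) and $\pi^T\nabla_y g_i=0$ for $i\in I_+$ (since $\mu_i$ is free there); for $i\in\bar\alpha$, $\mu_i=0$ and no condition arises. Sign conventions on $\pi$ will be fixed to match the statement.

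The main obstacle is bookkeeping the signs so that the Farkas system matches the stated system exactly. The substantive point is that the union of projected cones is handled by requiring dual certificates for \emph{every} partition. The projection from $(dz,\mu)$ to $dz$ is harmless: minimizing a function of $dz$ alone over $P_\alpha$ is an LP in $(dz,\mu)$, whose dual is exactly the stated system. Finally I quantify over all $\lambda\in M(\bar z)$ and all partitions, which gives the equivalence in both directions.
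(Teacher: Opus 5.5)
Your proposal is correct: the polyhedral AVI has an exact KKT description, and splitting $I_0(\bar z,\lambda)$ into the $2^{|I_0|}$ pieces $(\alpha,\bar\alpha)$ gives polyhedral cones $P_\alpha$. Applying Farkas/LP duality to each $P_\alpha$, with multiplier $-\pi$ on the AVI equation, reproduces the stated system exactly, including $\pi^T\nabla_y g_i\le 0$ on $\alpha$, $\pi^T\nabla_y g_i=0$ on $I_+$, and $\eta_i\ge 0$ on $\bar\alpha$. The paper states this theorem without proof, so there is nothing in it to compare against directly; your argument is the standard decomposition-plus-LP-duality proof, and it matches how the paper's exercise solutions describe the result as $2^c$ linear systems.
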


\begin{corollary}[Finite verification under CRCQ and extreme CQ]
If, instead of SBCQ and full CQ, one assumes CRCQ and extreme CQ at
$\bar z$, then Theorem~\ref{thm:334} remains valid with $M(\bar z)$
replaced by the finite set $M^e(\bar z)$. Hence stationarity reduces to
finitely many linear inequality systems.
\end{corollary}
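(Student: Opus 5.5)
The plan is to rerun the proof of Theorem~\ref{thm:334}, replacing the full linearized cone $\Lcal(\bar z;\Fcal)$ by the extreme-point cone $\Lcal^e(\bar z;\Fcal)$ throughout. The first step is to recover the primal equivalences of Corollary~\ref{cor:331} in this setting. Under CRCQ, Proposition~\ref{prop:322} gives $T(\bar z;\Fcal)\subseteq\Lcal^e(\bar z;\Fcal)$. The extreme CQ is the statement $T(\bar z;\Fcal)=\Lcal^e(\bar z;\Fcal)$. Hence stationarity of $\bar z$ is equivalent to
\[
\nabla f(\bar z)^Tdz\ge 0\qquad\text{for all } dz\in\Lcal^e(\bar z;\Fcal).
\]
This step does not use SBCQ at all: the only place Corollary~\ref{cor:331} needed SBCQ was to get the inclusion of Lemma~\ref{lem:321}, and Proposition~\ref{prop:322} supplies that inclusion instead.

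Next I decompose $\Lcal^e$ into finitely many polyhedral pieces. Because $M(\bar z)$ is a polyhedron, $M^e(\bar z)$ is finite. Since $Z$ is polyhedral,
\[
T(\bar z;Z)=\{(dx,dy): G_{A}dx+H_{A}dy\le 0\},
\]
where $A$ is the active row set of $Gz+a\le 0$ at $\bar z$. Fix $\lambda\in M^e(\bar z)$. The graph of $\mathscr{L}\mathscr{S}_{(\bar z,\lambda)}$ consists of the pairs $(dx,dy)$ for which $dy\in\Kcal(\bar z,\lambda;dx)$ and
\[
w:=\nabla_xL(\bar z,\lambda)dx+\nabla_yL(\bar z,\lambda)dy
\]
lies in the negative of the polar, i.e.\ in the dual, of $\Kcal(\bar z,\lambda;dx)$ shifted to $dy$. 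I write this as the KKT system of the AVI over the polyhedron $\Kcal(\bar z,\lambda;dx)$ and split it by which inequalities indexed by $I_0(\bar z,\lambda)$ are held at equality. This gives one piece for each partition $(\alpha,\bar\alpha)$ of $I_0(\bar z,\lambda)$.

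For a fixed partition, each piece is a polyhedral cone (after projecting out the AVI multiplier) described by three groups of conditions:
\begin{align*}
&\text{linearized constraints } i\in\alpha \text{ held at equality},\quad
 i\in\bar\alpha \text{ held as }\le 0,\quad
 i\in I_+(\bar z,\lambda) \text{ held at equality};\\
&w=\nabla_y g(\bar z)^T\mu \text{ for some } \mu \text{ supported on } \Ical(\bar z), \text{ with } \mu_i\ge0 \text{ for } i\in\alpha;\\
&\text{the } T(\bar z;Z) \text{ inequalities.}
\end{align*}
In the second group, $\mu_i$ is free for $i\in I_+(\bar z,\lambda)$ and $\mu_i=0$ for $i\in\bar\alpha$; the sign convention here has to be checked against the theorem. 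Stationarity then says that $\nabla f(\bar z)^Tdz\ge0$ on each such cone.

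Finally, I apply LP duality (Farkas' lemma) to each cone, exactly as in Theorem~\ref{thm:334}. The multiplier $\zeta$ corresponds to the $Z$ rows, and complementarity $\zeta^T(G\bar x+H\bar y+a)=0$ encodes restriction to the active set $A$. The multiplier $\eta$ corresponds to the linearized $g$ constraints, and $\pi$ corresponds to the equation linking $w$ to $\mu$. The sign conditions on $\mu$ dualize into $\pi^T\nabla_y g_i(\bar z)\le0$ for $i\in\alpha$ and $\pi^T\nabla_y g_i(\bar z)=0$ for $i\in I_+(\bar z,\lambda)$. There is a mismatch to reconcile: the partition as I set it up would naturally tie the sign of $\eta$ to $\bar\alpha$, whereas the stated system puts the sign condition on $\eta$ on $\bar\alpha$ and the sign condition on $\pi$ on $\alpha$. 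The assignment of indices to $\alpha$ and $\bar\alpha$ must be made consistent with the statement, most simply by relabeling the partition.

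The main obstacle is carrying out this dualization exactly, so that projecting out $\mu$ produces precisely the stated sign pattern. Once that is done, finiteness is immediate: there are finitely many $\lambda\in M^e(\bar z)$, each with finitely many partitions of $I_0(\bar z,\lambda)$, so stationarity reduces to finitely many linear inequality systems.
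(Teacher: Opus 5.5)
Your proposal is correct and follows the route the paper intends. The paper gives no separate proof of this corollary; it is meant as the argument of Corollary~\ref{cor:331} and Theorem~\ref{thm:334} rerun with $\Lcal^e(\bar z;\Fcal)$, which equals $T(\bar z;\Fcal)$ by the extreme CQ, in place of $\Lcal(\bar z;\Fcal)$. The per-multiplier, per-partition Farkas step is unchanged, and finiteness comes from $|M^e(\bar z)|<\infty$ together with the $2^{|I_0(\bar z,\lambda)|}$ partitions for each $\lambda$.

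One correction is needed. The AVI's KKT system reads $w+\nabla_y g(\bar z)^T\mu=0$ with $\mu_i\ge 0$ on $\alpha$; your displayed $w=\nabla_y g(\bar z)^T\mu$ with $\mu_i\ge 0$ has the wrong sign. With the correct sign, your own labeling dualizes to exactly the stated system:
\begin{itemize}
\item $\eta_i\ge 0$ on $\bar\alpha$, from the rows kept as inequalities;
\item $\pi^T\nabla_y g_i(\bar z)\le 0$ on $\alpha$, from $\mu_i\ge 0$;
\item $\pi^T\nabla_y g_i(\bar z)=0$ on $I_+(\bar z,\lambda)$, from free $\mu_i$.
\end{itemize}
So the ``mismatch'' you describe is not one, and no relabeling is needed. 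Relabeling the partition would not have repaired a sign error anyway.
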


\subsection{Special Case: NCP-Constrained MPECs}

\begin{definition}[NCP-constrained MPEC]
An important special case occurs when
\[
g(x,y)=-y.
\]
Then the lower-level equilibrium condition becomes the nonlinear
complementarity system
\[
y\ge 0,\qquad F(x,y)\ge 0,\qquad y^T F(x,y)=0.
\]
\end{definition}

\begin{definition}[NCP index sets]
For a feasible point $\bar z=(\bar x,\bar y)$, define
\begin{align*}
\alpha(\bar z)&:=\{\,i:\ \bar y_i>0=F_i(\bar z)\,\},\\
\beta(\bar z)&:=\{\,i:\ \bar y_i=0=F_i(\bar z)\,\},\\
\gamma(\bar z)&:=\{\,i:\ \bar y_i=0<F_i(\bar z)\,\}.
\end{align*}
The indices in $\beta(\bar z)$ are called \emph{degenerate}.
\end{definition}

\begin{remark}
In the NCP case, the multiplier set is a singleton:
\[
M(\bar z)=\{F(\bar z)\}.
\]
Moreover, for any $dx\in\R^n$, the directional critical set becomes the
constant cone
\[
\R^{|\alpha(\bar z)|}\times \R_+^{|\beta(\bar z)|}\times \{0\}^{|\gamma(\bar z)|},
\]
and the affine variational inequality in Lemma~\ref{lem:321} reduces to a
mixed linear complementarity problem.
\end{remark}

\subsection{Engineering Interpretation}

\begin{remark}[Why this matters in engineering]
In engineering applications, an equilibrium subproblem often models
contact, traffic, flow balance, or market clearing, while the upper-level
optimization controls geometry, design parameters, or operating points.
The results above show that:
\begin{enumerate}[label=\arabic*.]
\item local optimality must be checked against the true tangent cone,
      not merely an NLP reformulation;
\item the MPEC linearized cone is the correct first-order replacement;
\item suitable MPEC-specific constraint qualifications are essential;
\item stationarity can be expressed in both primal and primal--dual form.
\end{enumerate}
\end{remark}

\section{Exercise}

\subsection{True or False}

\begin{enumerate}[label=(\roman*)]
\item \textbf{MPEC Tangent Cone Geometry}:
    
In standard nonlinear programming, the basic constraint qualification ensures the tangent cone is polyhedral. In contrast, the tangent cone of an MPEC’s feasible region at a feasible point is generally the union of several polyhedral convex cones.

\item \textbf{Extreme vs. Full Constraint Qualifications (CQ)}:

According to the text, the full constraint qualification (CQ) for an MPEC logically implies the extreme CQ without requiring any additional mathematical assumptions.

\item \textbf{Strong Nondegeneracy}:

A feasible solution $\overline{z}$ of the MPEC is considered "strongly nondegenerate" if and only if the set of active gradients is linearly independent, which consequently implies the multiplier set $M(\overline{z})$ is empty.

\item \textbf{Local Minima Equivalence}:

If there exists at least one multiplier $\overline{\lambda} \in M(\overline{z})$ such that the triple $(\overline{x},\overline{y},\overline{\lambda})$ is a local minimizer of the corresponding KKT formulation, then the pair $(\overline{x},\overline{y})$ is guaranteed to be a local minimizer of the original MPEC.

\item \textbf{Primal Stationarity under SBCQ}:

Assuming the Strictly Binding Constraint Qualification (SBCQ) and the full CQ hold at $\overline{z}$, $\overline{z}$ is a stationary point of the MPEC if and only if $\nabla f(\overline{z}) \in \mathcal{L}(\overline{z};\mathcal{F})^*$.

\item \textbf{The Strict Mangasarian-Fromovitz CQ (SMFCQ)}:

The Strict Mangasarian-Fromovitz Constraint Qualification (SMFCQ) holds at $(\overline{x},\overline{y},\overline{\lambda})$ if and only if the multiplier set $M(\overline{z})$ is a singleton containing exactly $\overline{\lambda}$.

\item \textbf{The Directional Critical Set}:

For any given direction $dx \in \mathbb{R}^n$ and multiplier $\lambda \in M(\overline{z})$, the directional critical set $\mathcal{K}(\overline{z},\lambda;dx)$ is non-empty if and only if $\lambda$ belongs to the set of critical multipliers $M^c(\overline{z};dx)$.

\item \textbf{Complexity of Primal-Dual Stationarity}:

When deriving the primal-dual stationarity conditions for an MPEC where the multiplier set $M(\overline{z})$ is a singleton (e.g., $M(\overline{z}) = \{\overline{\lambda}\}$), the complementarity condition decomposes into exactly one system of equalities and inequalities, avoiding the disjunctive complexity seen in non-singleton sets.

\item \textbf{Polyhedral Representation of the Linearized Cone}:

If the Constant Rank Constraint Qualification (CRCQ) holds at $\overline{z}$ and the tangent cone $\mathcal{T}(\overline{z};Z)$ is polyhedral, the specialized cone $\mathcal{L}^e(\overline{z};\mathcal{F})$ is guaranteed to be the union of a finite number of polyhedral cones.

\item \textbf{CQs and the KKT Formulation Tangent Cone}:

If the SMFCQ and the CRCQ both fail at a feasible point $\overline{z}$, it is mathematically impossible to prove that the tangent cone $\mathcal{T}(\overline{z};\mathcal{F})$ contains the projection of $\mathcal{T}((\overline{z},\lambda);\mathcal{F}^{KKT})$ onto $\mathbb{R}^{n+m}$.
\end{enumerate}

\subsection{Computational questions}
\paragraph{Question 1. Basic CQ Failure.}
Consider the MPEC formulation where $F(x,y) = y$ and $g(x,y) = -y$ for $(x,y) \in \mathbb{R}^2$. Recast this using the standard KKT formulation to obtain the NLP: $\min f(x,y)$ s.t. $y-\lambda=0$, $y\lambda=0$, $y,\lambda \ge 0$. 
    For an arbitrary feasible point $\overline{z} = (x,0,0)$, explicitly compute the tangent cone $\mathcal{T}(\overline{z}; \mathcal{F}^{NLP})$ and the linearized cone $\mathcal{L}(\overline{z}; \mathcal{F}^{NLP})$. Use your result to prove that the basic NLP constraint qualification fails.

    \paragraph{Question 2. Direct Tangent Cone Computation.} Consider an MPEC in $\mathbb{R}^3$ with $F(x,y_1,y_2) = (x^2+x+y_1, x^3+y_2)^T$ and $g(x,y_1,y_2) = -(y_1,y_2)^T$. The inner VI is cast as a complementarity system: $x^2+x+y_1 \ge 0 \perp y_1 \ge 0$ and $x^3+y_2 \ge 0 \perp y_2 \ge 0$. 
    At the global optimum $\overline{z} = (0,0,0)$, compute the exact set of limit vectors $dz = \lim_{k\to\infty} \tau_k^{-1}(z^k - \overline{z})$ to explicitly derive the non-convex tangent cone $\mathcal{T}(\overline{z}; \mathcal{F})$.

    \paragraph{Question 3. Linearized Cone and Convex Hulls.} Using the same MPEC from Question 2 at $\overline{z} = (0,0,0)$, compute the linearized cone $\mathcal{L}(\overline{z}; \mathcal{F}^{NLP})$ based on the standard nonlinear programming formulation. Prove via direct algebraic comparison that the convex hull of $\mathcal{T}(\overline{z}; \mathcal{F})$ is a proper subset of $\mathcal{L}(\overline{z}; \mathcal{F}^{NLP})$.

    \paragraph{Question 4. Multiplier Set and Extreme Points.} Let the inner VI of an MPEC be defined by $\min_{y} \frac{1}{2}\|y-x\|^2$ subject to $C = \{y \in \mathbb{R}^2 : y^T y \le 1, y_1 \le 1\}$. 
    Given $\overline{x} = (2,0)$ and $\overline{y} = (1,0)$, compute the complete Lagrange multiplier set $M(\overline{z})$ for the lower-level problem. Identify the extreme points $M^e(\overline{z})$ and determine whether the Strict Mangasarian-Fromovitz Constraint Qualification (SMFCQ) holds.

    \paragraph{Question 5. Directional Critical Sets.} Continuing with the system from Question 4, select the extreme multiplier $\overline{\lambda} = (1/2, 0) \in M^e(\overline{z})$. For an arbitrary direction $dx = (dx_1, dx_2)^T$, compute the polyhedral directional critical set $\mathcal{K}(\overline{z}, \overline{\lambda}; dx)$ by setting up and solving the corresponding inequalities.

    \paragraph{Question 6. Full CQ Evaluation in NCP Constraints.} 
    Consider an MPEC with 1 upper-level and 1 lower-level variable given by $Z \equiv \mathbb{R}_+ \times \mathbb{R}$, $F(x,y) \equiv y^2+x^2$, and $C(x) \equiv \mathbb{R}_+$.
    At the feasible point $\overline{z} = (0,0)$, compute both $\mathcal{T}(\overline{z}; \mathcal{F})$ and $\mathcal{L}(\overline{z}; \mathcal{F})$. Does the full CQ hold at this point? Justify your computation.

    \paragraph{Question 7. Primal-Dual NCP Stationarity.} Consider the MPEC: $\min x^2+x+2y_1+y_2^2$ subject to $0 \le y_1 \perp x^2+x+y_1 \ge 0$ and $0 \le y_2 \perp x^3+y_2 \ge 0$. 
    At the solution $\overline{z}=(0,0,0)$, identify the degenerate index set $\beta(\overline{z})$. Then, explicitly write out the stationarity system (from Theorem 3.3.6) for the specific subset $\beta_1 = \{1\}$ and compute a valid set of multipliers $(\pi_1, \pi_2)$ to satisfy the system.

    \paragraph{Question 8. Dual Program for Critical Sets.} Recall that the directional critical set $\mathcal{K}(\overline{z}, \lambda; dx)$ can be characterized via a dual linear program: $\min dy^T F(\overline{z})$ subject to $dx^T \nabla_x g_i(\overline{z}) + dy^T \nabla_y g_i(\overline{z}) \le 0$ for $i \in \mathcal{I}(\overline{z})$.
    Formulate and solve this linear program manually for the problem in Question 4 at $\overline{z} = ((2,0), (1,0))$, leaving your answer in terms of an arbitrary $dx \in \mathbb{R}^2$.

    \paragraph{Question 9. Strong Nondegeneracy Verification.} Proposition 3.3.8 states that a point $\overline{z}$ is strongly nondegenerate if and only if the active gradients $\{\nabla_y g_i(\overline{z}) : i \in \mathcal{I}(\overline{z})\}$ are linearly independent and strictly complementary ($\overline{\lambda} - g(\overline{x},\overline{y}) > 0$).
    Consider $F(x,y) = (1,1)^T$ and $g(x,y) = (y_1-x_1, y_2-x_2^2)^T \le 0$. At $\overline{z} = (0,0,0,0)$, evaluate $M(\overline{z})$ and compute the active gradients to prove or disprove strong nondegeneracy.

    \paragraph{Question 10. Local Minima and KKT Equivalence.} Proposition 3.4.1 asserts that $\overline{z}$ is a local minimizer of an MPEC if and only if $(\overline{x},\overline{y},\overline{\lambda})$ is a local minimizer of its KKT formulation for \textit{all} $\overline{\lambda} \in M(\overline{z})$. 
    Return to the setup of Question 4 with the objective $f(x,y) = y_1$. For the non-extreme multiplier $\lambda^* = (0,1) \in M(\overline{z})$, prove by computing the objective values in a local neighborhood $W \times \mathbb{R}^l$ that $(\overline{x}, \overline{y}, \lambda^*)$ is not a local minimizer, thereby showing that the ``for all'' requirement cannot be relaxed.

\subsection{Proof questions}

\paragraph{Question 1. Topological Structure of the Linearized Cone under CRCQ} 
    Let $\overline{z} \in \mathcal{F}$ be a feasible point of an MPEC. The text defines the linearized cone $\mathcal{L}(\overline{z}; \mathcal{F})$ using the entire multiplier set $M(\overline{z})$, and $\mathcal{L}^e(\overline{z}; \mathcal{F})$ using only the extreme points $M^e(\overline{z})$. 
    \begin{enumerate}
        \item Prove rigorously that if the Constant Rank Constraint Qualification (CRCQ) holds at $\overline{z}$, then $\mathcal{T}(\overline{z}; \mathcal{F}) \subseteq \mathcal{L}^e(\overline{z}; \mathcal{F})$. 
        \item Detail the exact step in the limiting sequence argument where the CRCQ is strictly necessary to transition from an arbitrary sequence of multipliers $\{\lambda^k\} \subset M(z^k)$ to a limiting extreme point $\overline{\lambda} \in M^e(\overline{z})$.
    \end{enumerate}

    \paragraph{Question 2. Pathology of Constraint Qualifications}
    
    The text states that $\text{Full CQ} \Rightarrow \text{Extreme CQ} \Rightarrow \text{Basic CQ}$ (with the first implication requiring CRCQ). 
    In Example 3.4.2, the basic CQ holds, but both the extreme and full CQs fail. 
    Analyze the geometric geometry of the mapping $\mathcal{LS}_{(\overline{x},\lambda)}$ in this example. Formally prove why the union of the graphs of the solution sets over the extreme multipliers $M^e(\overline{z})$ is a proper, non-dense subset of $\mathcal{T}(\overline{z}; \mathcal{F})$, forcing the failure of the Extreme CQ.

    \paragraph{Question 3. Derivation of Disjunctive Stationarity Conditions}
    Theorem 3.3.4 provides the primal-dual stationarity conditions for a general MPEC by decomposing the index set $\mathcal{I}_0(\overline{z},\lambda)$. 
    Consider the special case of an NCP-constrained Mathematical Program (Theorem 3.3.6). Rigorously derive the system of equations and inequalities in Theorem 3.3.6 starting directly from the general conditions in Theorem 3.3.4. Explicitly show how the partition of $\mathcal{I}_0(\overline{z},\lambda)$ maps bijectively to the power set of the degenerate index set $\beta(\overline{z})$.

    \paragraph{Question 4. SMFCQ vs. MFCQ in Tangent Cone Projections.}
    Proposition 3.4.3(a) demonstrates that $\mathcal{T}(\overline{z}; \mathcal{F})$ is the projection of the KKT tangent cone $\mathcal{T}((\overline{z},\lambda); \mathcal{F}^{KKT})$ onto $\mathbb{R}^{n+m}$, provided the Strict Mangasarian-Fromovitz CQ (SMFCQ) holds.
    \begin{enumerate}
        \item Assume only the standard Mangasarian-Fromovitz CQ (MFCQ) holds (meaning $M(\overline{z})$ is compact but not necessarily a singleton). Construct a mathematically rigorous counter-example showing that the projection of the KKT tangent cone can be a strict subset of $\mathcal{T}(\overline{z}; \mathcal{F})$.
        \item Explain the failure geometrically in terms of the unboundedness of the normalized direction sequence $\overline{d}$ used in the proof of Proposition 3.4.3.
    \end{enumerate}

    \paragraph{Question 5. Strong Nondegeneracy and Manifold Collapse.}
    If a feasible solution $\overline{z}$ is strongly nondegenerate, Proposition 3.3.9 shows that the stationarity conditions collapse from a disjunctive set of linear systems to a single linear system.
    Prove that under strong nondegeneracy, the MPEC tangent cone $\mathcal{T}(\overline{z}; \mathcal{F})$ is locally isomorphic to the tangent space of a standard smooth nonlinear programming manifold. Conclude your proof by demonstrating why the multiplier for the complementarity constraint inherently vanishes from the MPEC Lagrangian formulation in this specific regime.

    \paragraph{Question 6. Equivalence of Local Minima.}
    Proposition 3.4.1 establishes that $\overline{z}$ is a local minimizer of the MPEC if and only if $(\overline{x},\overline{y},\overline{\lambda})$ is a local minimizer of the equivalent KKT formulation for \textit{every} $\overline{\lambda} \in M(\overline{z})$. 
    Suppose you replace the standard VI constraint with a generalized variational inequality (GVI) where the base set $C(x)$ is non-convex but satisfies a local polyhedral property. Does the ``for all'' requirement in Proposition 3.4.1 still hold as a necessary and sufficient condition? Prove your assertion or provide a counter-example.

\section{Solutions}
\subsection{Solutions for True or False}

\begin{enumerate}[label=(\roman*)]
\item True.
The text explicitly notes that unlike traditional NLP where CQs ensure a polyhedral tangent cone, the tangent cone $\mathcal{T}(\overline{z};\mathcal{F})$ of an MPEC is generally nonconvex and formed by the union of polyhedral convex cones.

\item False.
The text specifies that the implication ``full CQ $\Rightarrow$ extreme CQ'' is valid provided that the Constant Rank Constraint Qualification (CRCQ) holds. It is not an unconditional implication.

\item False.
Proposition 3.3.8 states that linear independence of the active gradients implies the multiplier set $M(\overline{z})$ is a singleton, not empty. Furthermore, strict complementarity must also hold ($\overline{\lambda} - g(\overline{x},\overline{y}) > 0$).

\item False.
While this "there exists" condition holds for global minima, Proposition 3.4.1 and Example 3.4.2 demonstrate that for local minima, the requirement cannot be relaxed. The KKT triple must be a local minimizer for every $\overline{\lambda} \in M(\overline{z})$ to guarantee the pair is a local minimizer of the MPEC.

\item True.
This is a direct restatement of Corollary 3.3.1 (statements a and b), which provides the primal characterization of stationarity for the MPEC.

\item True.
The text explicitly notes in Section 3.2 that it is not difficult to show that the SMFCQ holds at a given triple if and only if $M(\overline{z}) = \{\overline{\lambda}\}$.

\item True. 
This is directly affirmed by Proposition 3.2.3, which establishes the necessary and sufficient condition for the directional critical set to be non-empty based on the dual linear program.

\item False.
The text clarifies that even if $M(\overline{z})$ is a singleton, the complementarity condition still decomposes into $2^c$ systems of equalities and inequalities, where $c$ is the cardinality of the degenerate index set $\mathcal{I}_0(\overline{z},\overline{\lambda})$.

\item True.  
Proposition 3.2.2 guarantees this. Because $M^e(\overline{z})$ is a finite set of extreme points, the resulting cone $\mathcal{L}^e(\overline{z};\mathcal{F})$ simplifies to a finite union of polyhedral cones.

\item False. 
According to Remark 3.4.5, the first part of the proof of Proposition 3.4.3—which establishes that $\mathcal{T}(\overline{z};\mathcal{F})$ contains this specific projection—does not require any constraint qualifications. The CQs are only required for the reverse implication.
\end{enumerate}

\subsection{Solutions for computational questions}
\subsubsection*{Question 1.}
For the tangent cone, the feasible region is $\mathcal{F}^{NLP} = \mathbb{R} \times \{(0,0)\}$. Therefore, $\mathcal{T}(\overline{z}; \mathcal{F}^{NLP}) = \mathbb{R} \times \{(0,0)\}$.
For the linearized cone, evaluating the gradients of the constraints at $\overline{z} = (x,0,0)$ yields $\mathcal{L}(\overline{z}; \mathcal{F}^{NLP}) = \mathbb{R} \times \{(dy, dy) : dy \ge 0\}$.
Since $\mathcal{T}(\overline{z}; \mathcal{F}^{NLP}) \subsetneq \mathcal{L}(\overline{z}; \mathcal{F}^{NLP})$, the basic constraint qualification fails.

\subsubsection*{Question 2.}
For the feasible region, $\mathcal{F}$ is the union of three sets based on the complementarity conditions: $\{(x,0,0) : x \ge 0\}$, $\{(x,0,-x^3) : x \le -1\}$, and $\{(x,-x^2-x,-x^3) : x \in [-1,0]\}$.
For the tangent cone, taking the limit of $\tau_k^{-1}(z^k - \overline{z})$ as $x \to 0$ isolates the active branches at the origin. Therefore, $\mathcal{T}(\overline{z}; \mathcal{F}) = \{(a,0,0) : a \ge 0\} \cup \{(-a,a,0) : a \ge 0\}$.

\subsubsection*{Question 3.}
For the linearized cone, $\mathcal{L}(\overline{z}; \mathcal{F}^{NLP}) = \{(a,b,c) : a+b \ge 0, b \ge 0, c \ge 0\}$.
The convex hull of the tangent cone from Q2 is $Conv(\mathcal{T}) = \{(a,b,0) : a+b \ge 0, b \ge 0\}$. $Conv(\mathcal{T})$ restricts $c=0$, whereas $\mathcal{L}$ allows $c \ge 0$. Therefore, the convex hull is a proper subset of the linearized cone.

\subsubsection*{Question 4.}
The VI gradient equation is $(y-x) + \lambda_1(2y) + \lambda_2(1,0)^T = 0$. Plugging in $\overline{x}=(2,0)$ and $\overline{y}=(1,0)$ yields $(-1,0)^T + \lambda_1(2,0)^T + \lambda_2(1,0)^T = 0$, or $2\lambda_1 + \lambda_2 = 1$.
The multiplier set is $M(\overline{z}) = \{(\lambda_1, \lambda_2) \ge 0 : 2\lambda_1 + \lambda_2 = 1\}$. 
The extreme points are $M^e(\overline{z}) = \{(1/2, 0), (0, 1)\}$.
Therefore, SMFCQ fails: SMFCQ holds if and only if $M(\overline{z})$ is a singleton, but here it is a line segment.

\subsubsection*{Question 5.}
For the gradients, $\nabla_y g_1 = (2,0)^T$ and $\nabla_y g_2 = (1,0)^T$. $\nabla_x g_i = 0$.
For $\overline{\lambda} = (1/2, 0)$, $\lambda_1 > 0 \implies dy^T \nabla_y g_1 = 0 \implies 2dy_1 = 0 \implies dy_1 = 0$. For $\lambda_2 = 0 \implies dy^T \nabla_y g_2 \le 0 \implies dy_1 \le 0$.
Therefore, $\mathcal{K}(\overline{z}, \overline{\lambda}; dx) = \{(0, dy_2)^T : dy_2 \in \mathbb{R}\} = \{0\} \times \mathbb{R}$.

\subsubsection*{Question 6.}
$\mathcal{F} = \mathbb{R}_+ \times \{0\}$, so $\mathcal{T}(\overline{z}; \mathcal{F}) = \mathbb{R}_+ \times \{0\}$. From the linearizations, $\mathcal{L}(\overline{z}; \mathcal{F}) = \mathbb{R}_+^2$.
Therefore, $\mathcal{T}(\overline{z}; \mathcal{F}) \subsetneq \mathcal{L}(\overline{z}; \mathcal{F})$. Since the tangent cone is a proper subset of the linearized cone, the full CQ fails.

\subsubsection*{Question 7.}
The index set is $\beta(\overline{z}) = \{1, 2\}$ because both $y$ and $F(x,y)$ are $0$ at the origin.
For the stationarity system of $\beta_1=\{1\}$, based on the gradients of the specific subsets, the system reduces to $1 - \pi_1 = 0$, $0 - \pi_2 \ge 0$, and $\pi_1 \ge 0$.
Therefore, $(\pi_1, \pi_2) = (1, 0)$ strictly satisfies this system.

\subsubsection*{Question 8.}
$F(\overline{z}) = y-x = (-1,0)^T$. The objective is $\min -dy_1$.
For the constraints, $dy^T \nabla_y g_1 \le 0 \implies 2dy_1 \le 0$ and $dy^T \nabla_y g_2 \le 0 \implies dy_1 \le 0$. 
For the program $\max dy_1$ subject to $dy_1 \le 0$, the optimal solution is exactly $dy_1 = 0$ with $dy_2$ free, completely independently of $dx$.

\subsubsection*{Question 9.}
For the linear independence, $\nabla_y g = I_{2 \times 2}$, which is linearly independent.
For the multiplier set, solving $F + \nabla_y g^T \lambda = 0$ yields $(1,1)^T + I\lambda = 0 \implies \lambda = (-1,-1)^T$. 
Because $\lambda \not\ge 0$, $M(\overline{z}) = \emptyset$. The point is not a valid KKT point, so strong nondegeneracy definitively fails.

\subsubsection*{Question 10.}
Consider any point $x$ arbitrarily close to $\overline{x}=(2,0)$ where $x_2 \neq 0$. The unique projection onto the circle is $y(x) = (x_1, x_2)/\sqrt{x_1^2+x_2^2}$.
At this point, $y_1(x) < 1$. Thus, $f(x,y) < 1 = f(\overline{x},\overline{y})$.
Because points with strictly lower objective values exist arbitrarily close to $\overline{z}$ for $\lambda^*$, the KKT triple is not a local minimum. This proves the "for all" requirement is essential for local minima equivalence.

\subsection{Solutions for proof questions}
\subsubsection*{Question 1}
(a) Proof Sketch: Let $dz \in \mathcal{T}(\overline{z}; \mathcal{F})$. By definition, $dz = \lim \tau_k^{-1}(z^k - \overline{z})$ for some sequence $z^k \to \overline{z}$. For each $k$, we can select an extreme multiplier $\lambda^k \in M^e(z^k)$. Due to the Constant Rank Constraint Qualification (CRCQ), the sequence of extreme points $\{\lambda^k\}$ will have a limit point $\overline{\lambda}$ that belongs to $M^e(\overline{z})$. By passing to the limit in the linearized equations (Equation 3.2.6), $dz$ solves the AVI for $\overline{\lambda}$, meaning $dz \in \mathcal{L}^e(\overline{z}; \mathcal{F})$. 
(b) Limiting Step: CRCQ is strictly necessary to guarantee that the limiting gradients maintain linear independence. Without CRCQ, the limit of a sequence of extreme multipliers $\lambda^k \in M^e(z^k)$ might end up in the relative interior of $M(\overline{z})$ rather than at an extreme point, which would fail to place the limit in $M^e(\overline{z})$.

\subsubsection*{Question 2}
Proof Sketch: In Example 3.4.2, the extreme multiplier set is $M^e(\overline{z}) = \{(1/2,0), (0,1)\}$. The actual tangent cone $\mathcal{T}(\overline{z}; \mathcal{F})$ is exactly equal to the graph of the solution mapping for just one of these multipliers: $Gr(\mathcal{LS}_{(\overline{x}, \overline{\lambda})})$ where $\overline{\lambda} = (1/2,0)$. 
Failure of Extreme CQ: The cone $\mathcal{L}^e(\overline{z}; \mathcal{F})$ requires taking the union over all extreme points. Because $Gr(\mathcal{LS}_{(\overline{x}, (0,1))})$ contains directions not present in the actual tangent cone, $\mathcal{T}(\overline{z}; \mathcal{F}) \subsetneq \mathcal{L}^e(\overline{z}; \mathcal{F})$. Extreme CQ fails because this is a proper subset. Basic CQ holds because there exists a valid subset $M' = \{(1/2,0)\}$ where equality holds.

\subsubsection*{Question 3}
Derivation: For NCPs, $g(x,y) \equiv -y$. The active set $\mathcal{I}(\overline{z})$ corresponds to $y_i = 0$. Because $M(\overline{z}) = \{F(\overline{z})\}$, the degenerate index set $\mathcal{I}_0(\overline{z}, \lambda)$ contains indices where both $y_i = 0$ and $F_i(\overline{z}) = 0$. This is the exact definition of the set $\beta(\overline{z})$.
Bijection: Theorem 3.3.4 partitions $\mathcal{I}_0$ into subsets $\alpha$ and $\overline{\alpha}$. By mapping $\alpha \mapsto \beta_1$ and $\overline{\alpha} \mapsto \beta(\overline{z}) \setminus \beta_1$, the generic complementarity conditions directly collapse into the $2^{|\beta(\overline{z})|}$ disjunctive linear systems shown in Theorem 3.3.6. 

\subsubsection*{Question 4}
(a) Counter-example Logic: If only MFCQ holds, $M(\overline{z})$ is a compact polytope but not a singleton. In the proof of Proposition 3.4.3, we must bound the sequence $\tau_k^{-1}(\lambda^k - \overline{\lambda})$. If SMFCQ fails, this sequence can diverge.
(b) Geometric Failure: Without SMFCQ, the normalized direction sequence $\overline{d} = \lim (\lambda^k - \overline{\lambda}) / \|\lambda^k - \overline{\lambda}\|$ might be non-zero while satisfying $\sum \overline{d}_i \nabla_y g_i(\overline{z}) = 0$. This allows the KKT tangent directions to contain unbounded multiplier components that do not project onto valid primal directions in $\mathcal{T}(\overline{z}; \mathcal{F})$, making the projection a strict superset.

\subsubsection*{Question 5}
Proof Sketch: By Proposition 3.3.8, strong nondegeneracy means active gradients are linearly independent and strict complementarity holds ($\overline{\lambda} - g(\overline{x},\overline{y}) > 0$). This implies $\mathcal{I}_0(\overline{z}, \overline{\lambda}) = \emptyset$. 
Collapse: Because the degenerate set is empty, the power set has only one element ($\emptyset$). The directional critical set $\mathcal{K}$ transitions from a polyhedral cone to a linear subspace (an affine subspace for a given $dx$). The complementarity multiplier vanishes from the MPEC Lagrangian because the active constraints are strictly binding, effectively acting as standard equality constraints locally and smoothing the feasible region into a differentiable manifold.

\subsubsection*{Question 6}
Conclusion: The "for all" requirement is no longer sufficient.
Proof Sketch: Proposition 3.4.1 relies critically on the convexity of $g_i(x,\cdot)$ to guarantee that a KKT neighborhood $W \times \mathbb{R}^l$ maps cleanly to a primal feasible neighborhood $W \cap \mathcal{F}$. If $C(x)$ is non-convex, a point $(\overline{x}, \overline{y}, \overline{\lambda})$ could be a local minimum of the KKT NLP formulation (acting as a stationary point or saddle in the primal space), but $(\overline{x}, \overline{y})$ may fail to be a valid local minimizer of the true MPEC because the primal feasible space lacks the convexity required to bound the neighborhood objective values.

\section{Bibliographic remarks and Acknowledgment}

\textbf{This note is mainly based on Chapter 3, in the MPEC monograph of Zhi-Quan Luo, Jong-Shi Pang and Daniel Ralph.} Please see the relevant references: 
\cite{falk1995bilevel,
aghasi2025fully,
hong2023two,
kovccvara1994optimization,
chaudet2020shape,
liu2023auxiliary,
liu2025bidirectional,
outrata1995numerical,
cui2023complexity,
christof2020nonsmooth,
rawat2026augmented,
robinson1980strongly,
chen2025control,
shin2023near,
bolte2024differentiating,
nghia2025geometric,
wang2025analysis,
chen2025aubin,
liu2024auxiliary,
kojima1980strongly,
chen2026characterizations,
cui2026lipschitz,
shin2022exponential,
de2023function,
bank1982d,
wang2026algebraic,
bonnans1994local,
khanh2024globally,
chen2025two,
mohammadi2022variational,
dussault2026polyhedral,
gowda1994stability,
liu2025learning,
qi2000constant,
facchinei1998accurate,
jittorntrum2009solution,
kyparisis1992parametric,
liu1995sensitivity,
pang11995stability,
liu2025risk,
liu2024feasibility,
qiu1992sensitivity,
aussel2024variational,
reinoza1985strong,
facchinei2003finite,
harker1990finite,
kyparisis1990sensitivity,
wang2025multi,
kleinmichel1972av,
ortega2000iterative,
wang2025analysis1,
bai2021matrix,
lin2026hierarchical,
gao2022rolling,
gander2026landmarks,
mishchenko2023regularized,
han2025low,
wang2022newton,
robinson2009generalized,
mordukhovich2023globally,
wang2026damage,
robinson2009local,
shuo2026lecture,lin2023monotone,yu2026optimization,liang2025squared,jongen1987inertia,guddat1990parametric,bellon2024time,tang2022running,liu2025new,josephy1979newton,si2024riemannian,yu2026optimization1,longman2023method,yao2023relative,han2024continuous,ha1987application,yu2026pattern,kojima2009continuous,seguin2022continuation,liu1995perturbation,pang1996piecewise,liu2022iterative,eikenbroek2022improving,ralph1995directional,yu2026optimization2,scheel2000mathematical,bonnans1992developpement,bonnans1992expansion,dempe1993directional,liu2023iterative,shapiro1988sensitivity,pang1990newton,robinson1991implicit,zheng2025enhanced,hang2025smoothness,hang2024role,scholtes2012introduction,cui2022nonconvex}. 

\bibliography{reference1} 

\end{document}